\documentclass[runningheads,twoside]{llncs}

\usepackage[ansinew]{inputenc}
\usepackage[T1]{fontenc}

\usepackage{multirow,bigdelim}   
\usepackage{graphicx}
\usepackage{amssymb}
\usepackage{textcomp}
\usepackage{amscd}
\usepackage{amsmath}
\bibliographystyle{amsalpha}
\usepackage{hyperref}
\usepackage{algorithm,algorithmic}

\begin{document}

\mainmatter              
\title{The isomorphic version of Brualdi's and Sanderson's nestedness is in P}

\titlerunning{Isomorphic Nestedness}
\author{Annabell Berger}
\institute{Institute of Computer Science, Martin Luther University Halle-Wittenberg,\\ Halle (Saale), Germany\\\href{mailto:berger@informatik.uni-halle.de}{berger@informatik.uni-halle.de}}
\authorrunning{A. Berger}
\maketitle

\begin{abstract}
The discrepancy BR for an $m \times n$ $0,1$-matrix from Brualdi and Sanderson 1998 counts the minimum number of $1$'s which need to be shifted in each row to the left to achieve its Ferrers matrix, i.e. each row consists of consecutive $1$'s followed by consecutive $0$'s. For ecological bipartite networks BR describes how nested a set of relationships is. Since different labeled matrices can be isomorphic but possess different discrepancies, we define a metric determining the minimum discrepancy in an isomorphic class. We give a reduction to $k\leq n$ minimum weighted perfect matching problems.
\end{abstract}

\section{Introduction}

Let's start with a formal description. Given is an $m \times n$ $0,1$-matrix $A$ with non-increasing row sums $(r_1,\dots,r_m)$ and non-increasing column sums $(c_1,\dots,c_{n}).$ The corresponding \emph{Ferrers matrix} $F$ is the matrix where each row $i$ of $F$ starts with $r_i$ $1$'s followed by $n-r_i$ $0$'s. It can be achieved by several \emph{shifts}. A shift is the movement of a $1$ in a row from a righter to a lefter column with entry $0$. Ferrers matrices together with majorization theory are classical concepts for deciding the question if there exists a matrix $A$ with given row and column sums. They were invented by Ferrers \cite{Syl:1882} in the $18$th century. Matrix $F$ has also row sums $(r_1,\dots,r_n)$ but different column sums, i.e., $(c'_1,\dots,c'_n)$ where $c'_i=|\{j~|~j \textnormal{ is row in A with } r_j \geq i\}|.$ A matrix $A$ can also be interpreted as the adjacency matrix of a bipartite graph, and the corresponding graph of $F$ is then a \emph{threshold graph}. For an overview about these problems we recommend Brualdi's book \cite{Brualdi2006} and Mahadev's and Peled's book \cite{MaPe:95}.\\
However, Ferrers matrices also occur in ecology under the name \emph{nested matrices} to describe idealized ecological networks due to the strength of nestedness. In these contexts matrix $A$ represents the presence or absence of interactions between pollinator and plant species, or, the occurrence and absence of species on several islands. If $A$ was a nested matrix, then one bee species in an arbitrary pair of bee species, would always pollinate a subset of the plant species which is pollinated from the other one. 
The opposite scenario is that, all bee pairs pollinate completely different plants. If $A$ was not nested the natural question arises of how far the ecological matrix is apart from being nested. Brualdi and Sanderson answered this question by the metrics of discrepancy $disc(A)$ between $A$ and $F$ \cite{Brualdi1999}. They count how many $1$'s need to be shifted at least to the left to achieve $F$. This value can easily be calculated but there is one conceptual problem regarding the ecological application. The exchange of two columns in $A$ with same column sum keeps all interactions between pollinators and bees but can lead to a different discrepancy. The reason is that both matrices are formally considered as different because two bees have different labels. Consider Example~\ref{Example:DifferentDiscrepancies}. 

\begin{example}
\label{Example:DifferentDiscrepancies}
Matrix $A:=\left(\begin{matrix} 1&\mathbf{0}&1&\mathbf{1}\\1&\mathbf{0}&1&\mathbf{1}\\1&1&\mathbf{0}&\mathbf{1}\\1&1&0&0\\1&1&0&0\\1&\mathbf{0}&\mathbf{1}&0\end{matrix}\right )$ and  
$B:=\left(\begin{matrix} 1&\mathbf{0}&1&\mathbf{1}\\1&\mathbf{0}&1&\mathbf{1}\\1&1&1&0\\1&1&0&0\\1&1&0&0\\1&\mathbf{0}&0&\mathbf{1}\end{matrix}\right )$ are isomorphic. Only the third and fourth columns are exchanged. In $A$ we need to shift the four bold $1$'s to achieve its corresponding Ferrers matrix $F=\left(\begin{matrix} 1&1&1&0\\1&1&1&0\\1&1&1&0\\1&1&0&0\\1&1&0&0\\1&1&0&0\end{matrix}\right )$, i.e. $disc(A)=4.$ In matrix $B$ we only need to shift three $1$'s to achieve $F$ leading to $disc(B)=3.$ 
\end{example}

For ecological purposes these labels are not necessary. They are only assigned to columns and rows due to the mathematical denotation of columns and rows. In more formal words both matrices are isomorphic but lead to several values of discrepancy. The discrepancy between $A$ and $F$ should be extended to the problem of finding the minimal number of shifts under all isomorphic graphs of $A$ to achieve $F$. The \emph{isomorphic discrepancy} finds an optimal labeling for a network regarding a minimum number of shifts to achieve the nested matrix $F.$ We solve this problem in reducing it to a couple of maximum perfect weighted matching problems leading to an $O(n^3)$ asymptotic running time.\\
Please observe that there exist other approaches and assumptions how a perfect nested matrix $N$ for $A$ has to look like. This results in another difference of both matrices. One approach is to replace a minimal number of $0$'s by $1$'s in $A$ such that $A$ (or an isomorphic matrix of $A$) becomes nested. This problem is known as \emph{minimal chain completion problem} and was shown to be NP-complete in 1981 by Yannakakis \cite{Yannakakis1981}. The opposite problem is to delete a minimal number of $0$'s in $A$ to achieve a nested matrix which is also NP-complete (please exchange the rules of $1$'s and $0$'s to see this). An approximation algorithm was given in the Phd-thesis of Juntilla in 2011 \cite{Junttila2011}. A whole bunch of formulas are used in the ecological community. For an overview see the nestedness guide \cite{Ulrich2009} which also includes the discrepancy of Brualdi and Sanderson (there called BR). All these metrics can be seen as heuristics for the above described approaches.\\
In the next section we give a formal definition of \emph{isomorphic discrepancy}, and show how it can be solved in asymptotic running time $O(n^3).$

\section{The discrepancy problem for isomorphic matrices} 

Let $A$ be an $m \times n$ $0,1$-matrix with non-increasing row sums $(r_1,\dots,r_m)$, and non-increasing column sums $(c_1,\dots,c_{n}).$ Then all isomorphic matrices of $A$ can be achieved by permuting columns and rows with equal sums. We denote the set of all isomorphic matrices of A by $I_A.$ The discrepancy of A given by Brualdi and Sanderson \cite{Brualdi1998} is the minimum number of shifts to achieve the corresponding Ferrers matrix $F$ as described in the last section. We denote the discrepancy of A by $disc(A)$.
We now give the definition of the \emph{isomorphic discrepancy} $Id(A)$ of A which is defined by 
\begin{equation} Id(A):=min_{B \in I_A}disc(B).\end{equation}
That is we try to find an isomorphic graph of $A$ with minimal discrepancy. We also want to mention that this new metric should be invariant against the transposition of $A.$ This can be achieved by transposing $A$ and determine $Id(A^t).$ The \emph{general isomorphic discrepancy} of $A$ is then the minimum value (or the mean value) of $Id(A)$ and $Id(A^t).$ We here focus on $Id(A)$, because the calculation of $Id(A^t)$ can be done analogously. \\
A simple approach were to permute all columns and rows with equal sums and to choose a permutation of $A$ which yields the minimum discrepancy. This can lead to the determination of exponentially many possible permutations. Please observe that it is sufficient for our problem to permute columns in $A$ and keep the rows fixed. The reason is that exchanging the order of rows with same sums lead to exactly the same discrepancy because the corresponding rows of the Ferrers matrix are identical. Consider our Example~\ref{Example:DifferentDiscrepancies} it can easily be seen that each permutation of the first three rows in $A$ or $B$ leads to the same kind of shifts, and so to the same discrepancy as before. Following this approach we will not necessarily generate all isomorphic matrices in $I_A$ but we will generate all possible discrepancies which occur for matrices in $I_A.$ The reason is that the switching of rows with equal sums does not change the number of shifts in a row. Hence, \emph{it is sufficient to permute columns for the calculation of $Id(A).$} Brualdi and Shen showed \cite{Brualdi1999} that there is always one matrix $C$ with minimum possible discrepancy $disc(C)=\sum_{j=1}^n(c'_j-c_j)^+$ under all matrices with row sums $(r_1,\dots,r_m)$ and column sums $(c_1,\dots,c_{n}).$  Example~\ref{Example:MinimumExample} shows that not each isomorphic class $I_A$ possesses a matrix $C$ with minimum discrepancy.

\begin{example}
\label{Example:MinimumExample}
For matrix $M:=\left(\begin{matrix} 1&\mathbf{0}&1&\mathbf{1}\\1&1&1&0\\1&1&\mathbf{0}&\mathbf{1}\\1&1&0&0\\1&\mathbf{0}&0&\mathbf{1}\\1&\mathbf{0}&\mathbf{1}&0\end{matrix}\right )$ we have $disc(M)=4$, and corresponding Ferrers matrix $F$ from Example~\ref{Example:DifferentDiscrepancies}. The column sums from $M$ are $(6,3,3,3)$ and for $F$ we find $(6,6,3,0).$  Comparing the two column sums $(6,3,3,3)$ and $(6,6,3,0)$ one could get the impression that it could be sufficient to shift three $1$'s of the fourth column to the second column. In $M$ this is not possible because the third columns of $M$ and $F$ differ in the third and sixth rows. Hence, a $1$ in the third column of $M$ needs to be shifted to generate $F$. On the other hand it is easy to construct a matrix $C$ with $disc(C)=\sum_{j=1}^n(c'_j-c_j)^+=3$ by shifting the $1$'s of the first, fourth and sixth row in the second column of $F$ to the fourth column. However, $Id(M)=4$ because each permutation $\sigma$ of the last three columns of $M$ leads to a matrix $M^{\sigma}$ with $disc(M^{\sigma})=4.$ We observe in the third column of an $M^{\sigma}$ that the third column misses a $1$ whereas $F$ has one. Matrix $C$ cannot be achieved by a permutation of columns from $M$, i.e., $C \notin I_M.$ 
\end{example}

Now we want to develop a simple formula to calculate the discrepancy of a given matrix. We need this formula later to deviate an approach for the isomorphic version. Notice that a shift in a matrix $A$ will always be applied when $F_{i,j}=1$ and $A_{i,j}=0.$ Since $A$ and $F$ have identical row sums and due to the construction of $F$ there must always be an index $j'>j$ in $A$ with $F_{i,j'}=0$ and $A_{i,j'}=0.$ In our Examples~\ref{Example:DifferentDiscrepancies}, \ref{Example:MinimumExample} we marked these $0$'s in $A$, $B$ and $M.$ That means that the number of absences of $1$'s in $A,$ when they are present in $F,$ correspond to the number of minimum shifts $disc(A)$. We define the difference of the $j$th columns $V_j$ and $U_j$ of $m \times n$ $0,1$-matrices $V$ and $U$ by

\begin{equation}(V_j-U_j):= \sum_{i=1}^m(V_{i,j}-U_{i,j})^+.\end{equation}

The notation $()^+$ means that only positive differences are summed up. Back to our problem we find $(F_j-A_j)\geq c'_j-c_j$ for all $j$ with $c'_j-c_j\geq 0.$ The reason is that a column $F_j$ has $c'_j$ $1$'s and a column $A_j$ has $c_j$ many. That means the minimum difference of both columns happens if all $1$'s in $A_j$ also occur in $F_j$. Then $(F_j-A_j)= c'_j-c_j.$ Consider in Example~\ref{Example:DifferentDiscrepancies} the third columns of $A$, $B$ and $M.$ $B$ has all $1$'s of $F$ whereas $A$ has only two of them. Hence $F_3-A_3=1>0=c'_3-c_3$ but $F_3-B_3=0=c'_3-c_3.$ In  cases you find a $1$ in $A_j$ which does not occur in $F_j$, this $1$ needs to be shifted to a lefter column $j'$ where $F$ has a $1$ and $A$ a $0.$ A $1$ from a righter column $j''$ in $A$ has to be shifted to $A_j$ to achieve the $c'_j$ $1$'s in $F$. In Example~\ref{Example:DifferentDiscrepancies} this is the case for $A_{6,3}.$  We put this connection in another formula for the discrepancy.  

\begin{proposition}\label{prop:Disc} Given is the $m \times n$ $0,1$-matrix $A$ with non-increasing row sums $(r_1,\dots,r_{m})$ and non-increasing column sums $(c_1,\dots,c_n).$ Let $F$ be the corresponding Ferrers matrix. Then the minimum number of shifts to achieve $F$ from $A$ can be calculated by $disc(A)=\sum_{j \in  \mathbb{N}_n}(F_j-A_j)$.\end{proposition}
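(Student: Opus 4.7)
The plan is to establish that $\sum_{j}(F_j-A_j)$ is simultaneously a lower and an upper bound on $disc(A)$. First I would unpack the right-hand side: since all entries are in $\{0,1\}$, the summand $(F_{i,j}-A_{i,j})^+$ equals $1$ exactly when $F_{i,j}=1$ and $A_{i,j}=0$, so $\sum_{j=1}^n(F_j-A_j)$ is just the number $D$ of positions where $F$ carries a $1$ that $A$ lacks. Because $A$ and $F$ share the same row sums, within every row the number of such ``missing'' ones equals the number of ``extra'' ones, i.e.\ positions $(i,j)$ with $A_{i,j}=1$ but $F_{i,j}=0$.

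For the lower bound I would introduce the potential $\Phi(X):=|\{(i,j): F_{i,j}=1,\; X_{i,j}=0\}|$ and argue that a single shift decreases $\Phi$ by at most $1$. A shift in row $i$ moves a $1$ from some column $j''$ with $A_{i,j''}=1$ to a column $j'<j''$ with $A_{i,j'}=0$, so the contributions at $(i,j')$ and $(i,j'')$ change by $-[F_{i,j'}=1]$ and $+[F_{i,j''}=1]$ respectively, giving a net change in $\{-1,0,+1\}$. Since $\Phi(A)=D$ and $\Phi(F)=0$, any valid shift sequence transforming $A$ into $F$ must have length at least $D$, hence $disc(A)\geq\sum_j(F_j-A_j)$.

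For the matching upper bound I would exhibit a shift sequence of length exactly $D$. Fix a row $i$ and set $M_i:=\{j: F_{i,j}=1,\; A_{i,j}=0\}$ and $E_i:=\{j: F_{i,j}=0,\; A_{i,j}=1\}$. Because the $i$-th row of $F$ carries $1$s precisely on columns $1,\dots,r_i$ while row $i$ of $A$ contains exactly $r_i$ ones, I get $M_i\subseteq\{1,\dots,r_i\}$, $E_i\subseteq\{r_i+1,\dots,n\}$, and $|M_i|=|E_i|$. In particular every index in $M_i$ is strictly less than every index in $E_i$, so pairing the two sets arbitrarily produces $|M_i|$ legal shifts in row $i$, each strictly decreasing $\Phi$ by $1$. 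Carrying this out independently in every row yields a sequence of exactly $D$ shifts converting $A$ into $F$, so $disc(A)\leq D$.

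There is no serious obstacle here; the argument hinges on choosing a potential whose change under a single shift is bounded by $1$ in absolute value, together with the elementary row structure of $F$ (a prefix of $1$s followed by $0$s), which guarantees that the greedy pairing is both feasible and tight.
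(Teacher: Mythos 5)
Your proof is correct, but it follows a genuinely different route from the paper. The paper argues by induction on the number of columns: it identifies the set $I$ of rows with $F_{i,n-1}=1$ and $A_{i,n-1}=0$, performs the corresponding $|I|$ shifts from column $n$ to column $n-1$, deletes the now-settled last column, and invokes the induction hypothesis on the resulting $(n-1)$-column matrix. You instead prove the identity as a two-sided bound: the potential $\Phi(X)=|\{(i,j):F_{i,j}=1,\,X_{i,j}=0\}|$ drops by at most $1$ per shift (since the net change $[F_{i,j''}=1]-[F_{i,j'}=1]$ lies in $\{-1,0,+1\}$), giving $disc(A)\geq\Phi(A)=\sum_j(F_j-A_j)$; and the row structure of $F$ (a prefix of $1$'s of length $r_i$) forces $M_i\subseteq\{1,\dots,r_i\}$ and $E_i\subseteq\{r_i+1,\dots,n\}$ with $|M_i|=|E_i|$, so an arbitrary pairing yields exactly $\Phi(A)$ legal shifts realizing $F$. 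Your argument buys a cleaner justification of optimality: the paper's induction implicitly assumes that an optimal shift sequence can be reorganized so that the $|I|$ shifts into column $n-1$ come from column $n$ and are performed first (and that $B$ and $B'$ have equal discrepancy), steps that are stated rather than fully argued; your potential-function lower bound applies to \emph{every} shift sequence and so sidesteps any such restructuring. The paper's induction, on the other hand, is closer in spirit to the column-by-column bookkeeping that the rest of the paper builds on (Proposition~\ref{Prop:isomDisc} decomposes the sum over columns), but both proofs establish the same formula. One small point worth making explicit in your write-up: when you carry out the $|M_i|$ shifts within a row sequentially, the targets and sources are pairwise distinct, so each shift is still legal at the moment it is performed; you assert feasibility but this is the one-line reason.
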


\begin{proof}
We prove the claim via induction by the number $n$ of columns. For one column we have $A=F$ and so $disc(A)=0$ and $F_1-A_1=0.$ For two columns we only find shifts from the second two the first column. For each shift there exists an $i$ with $A_{i,1}=0,$ $A_{i,2}=1$ and $F_{i,1}=1,$ $F_{i,2}=0.$ Hence the number of shifts corresponds to $F_1-A_1.$ $F_2-A_2=0$ leads to the expected result. Let us now consider a matrix $A$ with $n$ columns. In column $n-1$ we consider the set $I$ of all indices $i$ where $F_{i,n-1}=1$ and $A_{i,n-1}=0.$ Due to the construction of a Ferrers matrix, and since $F$ and $A$ have equal row sums, we find for all $i \in I$ that $F_{i,n}=0$ and $A_{i,n}=1.$ We construct matrix $B$ by exchanging all entries $A_{i,n-1}=0$ to $1$, and all corresponding entries $A_{i,n}=1$ to $0$ where $i \in I.$ Basically, we apply $|I|$ shifts from the $n$th column to the $(n-1)$th column. Notice that we have $F_n-A_n=0$ due to the construction of $F.$ In matrix $B$ we only find shiftable $1$'s in smaller columns than in column $n$ due to our construction. Furthermore, $F_n-B_n=0.$ We delete the last column of $B$ and get the matrix $B'$ with $n-1$ columns. Matrices $B$ and $B'$ have exactly the same number of minimal shifts because in the first $(n-1)$ columns they are completely identically, and $B$ has in the $n$th column no shiftable $1$'s due to our construction. Furthermore, we know that $A$ has $|I|$ more shifts (in the $n$th column) than $B.$ We apply the induction hypothesis on $B'$ and get 
\begin{align*}
disc(A)-|I|=disc(B)=disc(B')=\sum_{j=1}^{n-1}F_j-B'_j=\sum_{j=1}^{n}F_j-B_j&=&\\
\sum_{j=1}^{n-2}(F_j-A_j)+(F_{n-1}-A_{n-1}-|I|)+(F_n-A_n)=\sum_{j=1}^{n}(F_j-A_j)-|I|.&&
\end{align*}
\end{proof}

We denote an isomorphic element of class $I_A$ by $A^{\sigma}$ where $\sigma:\{1,\dots,n\} \mapsto \{1,\dots,n\}$ is the permutation of the columns of $A$ which exchanges columns with same column sums. Recall that 
permuting columns in $A$ does \emph{not} generate all possible isomorphic versions for $A$. For this we needed to permute rows with equal column sums too. However, this approach covers all possible discrepancies which exist in the whole isomorphic class of $I_A$, because permuting rows with equal sums does not change the number of shifts. 
A permutation $\sigma$ can be divided in the convolution of $k$ permutations $\sigma_i$ if $A$ possesses $k$ different column sums, i.e., $(c_1,c_2,\dots,c_n)=(x_1,\dots, x_1,x_2,\dots,x_2,\dots,x_k,\dots,x_k)$. Each $\sigma_i:\{1,\dots,n\} \mapsto \{1,\dots,n\}$ permutes all columns with column sum $x_i$ independently, and keeps the indices of columns with different sums, i.e., $\sigma=\circ_{i=1}^{k}\sigma_i.$ We denote the set of all possible permutations $\sigma_i$ by $\Sigma_i$, and for all $\sigma$ by $\Sigma.$ For Example~\ref{Example:DifferentDiscrepancies} we get six different matrices which are built by permutations $\sigma_1$ and 
$\sigma_2$ where $\sigma_1$ is the identity permutation and $\sigma_2$ permutes the second, third and fourth columns of $A.$ The next Proposition states that an $A^{\sigma}$ in $I_A$ with minimum discrepancy can be found by calculating the minimum discrepancy in each submatrix of $A$ consisting of equal column sums seperately.

\begin{proposition}\label{Prop:isomDisc}
Given is the $m \times n$ $0,1$-matrix $A$ with non-increasing row sums $(r_1,\dots,r_{m})$ and non-increasing column sums $(c_1,\dots,c_n)=(x_1,\dots, x_1,x_2,\dots,$ $x_2,\dots,x_k,\dots,x_k)$. Let $F$ be the corresponding Ferrers matrix. Then the isomorphic discrepancy of matrix $A$ is 
\begin{equation}\label{eq}
Id(A)= \sum_{i=1}^k\left(\min_{\sigma_i \in \Sigma_i}\sum_{j:c_j=x_i}(F_j-A_{j}^{\sigma_i})\right).
\end{equation}
\end{proposition}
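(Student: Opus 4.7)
The plan is to reduce to Proposition~\ref{prop:Disc} and then exploit the fact that the column sum $F$-vector is an invariant of the isomorphism class, so the minimization decouples across the blocks of equal column sums.

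First I would fix the Ferrers matrix: since every $A^{\sigma} \in I_A$ is obtained by permuting columns of $A$ (and possibly rows with equal sums, which do not affect discrepancy as already noted in the text), the row sums are preserved, and hence the Ferrers matrix $F$ of $A^{\sigma}$ coincides with the Ferrers matrix $F$ of $A$. Then Proposition~\ref{prop:Disc} gives
\begin{equation*}
Id(A) = \min_{\sigma \in \Sigma} disc(A^{\sigma}) = \min_{\sigma \in \Sigma} \sum_{j=1}^{n} (F_j - A_j^{\sigma}).
\end{equation*}

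Next I would use the block decomposition $\sigma = \circ_{i=1}^{k} \sigma_i$ with $\sigma_i \in \Sigma_i$. The key observation is that $\sigma_i$ acts only on the indices $\{j : c_j = x_i\}$ and fixes all other column indices. Consequently, for any $j$ with $c_j = x_i$, the permuted column $A_j^{\sigma}$ depends only on $\sigma_i$ (not on the other $\sigma_{i'}$), so I may write $A_j^{\sigma} = A_j^{\sigma_i}$. Splitting the sum over $j$ according to which block $j$ belongs to yields
\begin{equation*}
\sum_{j=1}^{n} (F_j - A_j^{\sigma}) = \sum_{i=1}^{k} \sum_{j:c_j=x_i} (F_j - A_j^{\sigma_i}).
\end{equation*}

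Finally, since the $i$-th inner sum depends exclusively on $\sigma_i$ and the sets $\Sigma_1, \dots, \Sigma_k$ are independent, the minimum over $\sigma \in \Sigma = \Sigma_1 \times \cdots \times \Sigma_k$ factors into independent minima, giving exactly the stated formula \eqref{eq}. The only conceptual step that requires care is the invariance of $F$ under the permutations in $\Sigma$; everything else is a rearrangement of the sum and the elementary fact that a minimum of a separable objective splits over its independent coordinates. I do not anticipate a serious obstacle, since the heavy lifting for the column-wise characterization of discrepancy has already been performed in Proposition~\ref{prop:Disc}.
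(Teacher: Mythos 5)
Your proposal is correct and follows essentially the same route as the paper's own proof: apply Proposition~\ref{prop:Disc} to each $A^{\sigma}$, split the column sum into the $k$ blocks of equal column sums via $\sigma=\circ_{i=1}^{k}\sigma_i$, and observe that the minimum over $\Sigma$ separates into independent minima over the $\Sigma_i$. Your explicit remarks that $F$ is invariant under the permutations in $\Sigma$ and that the separable objective justifies exchanging $\min$ and $\sum$ are points the paper leaves implicit, but they do not change the argument.
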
 

\begin{proof}
The discrepancy $dis(A^{\sigma})$ of each isomorphic matrix $A^{\sigma}$ can be calculated with Proposition~\ref{prop:Disc}. Since the isomorphic discrepancy is defined as the minimum discrepancy in the set of all isomorphic matrices in $I_A,$ we get 

\begin{equation}
 Id(A)= \min_{\sigma \in \Sigma} \{disc(A^{\sigma})\}= \min_{\sigma \in \Sigma} \left(\sum_{j=1}^n (F_j-A^{\sigma}_j)\right).
\end{equation}

Since each $\sigma$ is the convolution of $k$ permutations $\sigma_i$ which permute only the column indices with same sums ($\sigma=\circ_{i=1}^{k}\sigma_i$) we can rewrite $\left(\sum_{j=1}^n (F_j-A^{\sigma}_j)\right)=\sum_{i=1}^k\left(\sum_{j:c_j=x_i}(F_j-A_{j}^{\sigma_i})\right).$ Together with (4) our claim follows. 
\end{proof}

Proposition~\ref{Prop:isomDisc} says that the minimization of each block with same row sum in $A$ can be done independently. More exactly, we need to find for each block in $A$ with same column sums an order (a permutation $\sigma_i$) of the columns which minimizes the following sum. 

\begin{equation} \label{equ:block} w(\sigma_i):= \sum_{j:c_j=x_i}(F_j-A_j^{\sigma_i}).\end{equation}

We reduce the calculation of $Id(A)$ \emph{for each block} $i$ in $A$ with same column sums $x_i$ to a \emph{minimum weighted perfect matching problem} in a complete bipartite graph $G_i=(F_v,A_v,E_i)$ where $|F_v|=|A_v|.$ Each vertex in $F_v$ corresponds to a column $j$ of Ferrers matrix $F$ and a vertex $A_v$ corresponds to a column $j$ of matrix A if $c_j=x_i.$ For simplicity we denote vertices of $F_v$ by columns $F_j$, or, of $A_v$ by $A_j$, respectively. We assign each edge $\{F_j,A_{j'}\} \in E_i$ the weight $w(F_j,A_{j'}):=(F_j-A_{j'})$. Consider Example~\ref{Example:perfectMatching}.

\begin{example}
\label{Example:perfectMatching}
For matrix $A:=\left(\begin{matrix} 1&1&0&1&0\\0&1&1&0&0\\1&0&0&0&1\end{matrix}\right )$ and its Ferrers matrix $F=\left(\begin{matrix} 1&1&1&0&0\\1&1&0&0&0\\1&1&0&0&0\end{matrix}\right )$ we get the two complete bipartite graphs $G_1$ (with vertices $A_1$,$A_2,$ $F_1$, $F_2$) and $G_2$ (with vertices $A_3,$ $A_4$, $A_5,$ $F_3$, $F_4$, $F_5$). The edge weights between a vertex $A_i$ and $F_j$ are $w(F_j,A_{i}):=(F_j-A_{i}).$ This leads in $G_1$ to a weight $1$ for all edges. Hence, every perfect matching corresponds to an order of the columns in $A$ such that the discrepancy is minimal. Contrary in $G_2$ we find the minimum perfect weighted matching for the edges $\{A_3,F_4\},\{A_4,F_3\},\{A_5,F_5\}$ because $w(F_3-A_4)=0$ but $w(F_3-A_5)=1$ and $w(F_3-A_3)=1.$ All other weights in $G_2$ are $0.$
\end{example}

\begin{theorem}
Calculating the isomorphic distance $Id(A)$ of an $m \times n$ $0,1$-matrix $A$ needs $O(n^3)$ asymptotic time.
\end{theorem}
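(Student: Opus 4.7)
The plan is to cash in the reduction already set up just before the theorem statement. By Proposition~\ref{Prop:isomDisc}, $Id(A)$ splits as a sum of $k$ independent minimizations, one for each block of equal column sums; let $n_i$ denote the size of the $i$-th block, so $\sum_{i=1}^k n_i = n$. The paragraph preceding the theorem then shows that the minimization inside block $i$ is equivalent to a minimum weight perfect matching on the complete bipartite graph $G_i=(F_v,A_v,E_i)$, because each permutation $\sigma_i\in\Sigma_i$ corresponds bijectively to a perfect matching whose total weight is exactly $w(\sigma_i)$ from~(\ref{equ:block}). So the proof reduces to bounding the cost of solving these $k$ matching problems plus the preprocessing needed to produce their edge weights.

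For the preprocessing I would first read off the conjugate column sums $c'_1,\dots,c'_n$ from the row sums and represent $F$ implicitly through them, which costs $O(m+n)$; I would also precompute the column prefix sums of $A$ in $O(mn)$ time. Because the ones of $F_j$ occupy exactly the top $c'_j$ rows, the edge weight $w(F_j,A_{j'}) = (F_j - A_{j'})$ is simply $c'_j$ minus the prefix sum of column $j'$ of $A$ up to row $c'_j$, and is therefore readable in $O(1)$. The weight table of block $i$ is filled in $O(n_i^2)$, after which the Hungarian algorithm solves the matching on $G_i$ in $O(n_i^3)$; summing the $k$ optimal weights yields $Id(A)$.

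The final step is the aggregation
\[
\sum_{i=1}^{k} n_i^3 \;\le\; \Bigl(\sum_{i=1}^{k} n_i\Bigr)^{3} \;=\; n^3,
\]
which bounds the total matching cost by $O(n^3)$ and dominates everything else. The only real point of care --- the main ``obstacle'', such as it is --- is to keep each edge-weight query at $O(1)$ via the implicit description of $F$ and the column prefix sums; a naive implementation that spends $\Theta(m)$ per weight would add an $O(m n^2)$ term and spoil the stated $O(n^3)$ bound. With these bookkeeping choices in place, the theorem follows directly from Proposition~\ref{Prop:isomDisc} and the cubic complexity of minimum weight bipartite matching.
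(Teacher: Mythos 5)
Your proposal follows the same route as the paper: reduce each equal-column-sum block to a minimum weight perfect matching on $G_i$, solve each in $O(n_i^3)$, and aggregate via $\sum_i n_i^3 \le (\sum_i n_i)^3 = n^3$. The only addition is your explicit accounting of the edge-weight preprocessing via the conjugate sums and column prefix sums, which the paper leaves implicit but which does not change the argument.
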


\begin{proof}
Let $M_i$ be a minimum weighted perfect matching in bipartite graph $G_i.$ Then $\sigma_i:\{1,\dots,n\}\mapsto \{1,\dots,n\}$ with $\sigma_i(j')=j$ for all edges $\{F_j,A_{j'}\} \in M_i$, and $\sigma_i(j)=j$ for all $j$ with $c_j \neq x_i$ is a permutation of all columns in $A$ with sum $x_i$ which minimizes $w(\sigma_i)$ in Equation~(\ref{equ:block}). For $\sigma:=\circ_{i=1}^k\sigma_i $ we can conclude with Proposition~\ref{Prop:isomDisc} that $\sum_{i=1}^k w(M_i)=\sum_{i=1}^k (\sum_{j:c_j=x_i}(F_j-A_j^{\sigma_i}))=disc(A).$
Contrary, each permutation $\sigma$ which minimizes $w(\sigma_1),\dots,w(\sigma_k)$ corresponds in each graph $G_i$ to a perfect matching $M_i$ with $\{F_j,A_{j'}\} \in M_i$ if $\sigma_i(j')=j$ for $F_j \in F_v$ and $A_{j'} \in A_v$ which is by construction minimal.\\
Let $n_i$ be the number of columns with sum $x_i.$ Then the computation of a minimum weighted perfect matching in $G_i$ is $O(n_i^3),$ see for example~\cite{Gab:1974}. Since $\sum_{i=1}^k n_i^3 \leq (\sum_{i=1}^k n_i)^3=n^3$ we get a total asymptotic running time of $O(n^3).$
\end{proof}

Please observe that under certain conditions the construction of several graphs $G_i$ can be avoided. This happens for example when the corresponding Ferrers columns are completely equal like in Example~\ref{Example:perfectMatching} in $G_1$. Then all edge weights in $G_i$ are also equal, and therefore each perfect matching is a solution. Neither is necessary to construct in graphs $G_i$ all vertices of $F_v$. For columns $F_j$ which possess only $0's$ (or only $1$'s) we get for all edges which end in these vertices equal weights $0$ (or $c_j$) like in Example~\ref{Example:perfectMatching} for the fourth and fifth column of $F$. That is we can refine the construction of $G_i$ to $n_i$ vertices in $A_v$ which are connected to all columns $F_j$ with same index but $0<c'_j<m.$ Then the optimal solution is not a perfect but a minimum weighted matching with the size of number of columns $F_j$ in $G_i.$ This would reduce graph $G_2$ to a claw, i.e. $A_3,$ $A_4$ and $A_5$ are connected with $F_3.$ The optimal solution (minimum weighted matching) is then edge $\{A_4,F_3\}.$\\

\end{document}